\theoremstyle{plain}
\newtheorem{theorem}{Theorem}
\newtheorem{lemma}[theorem]{Lemma}
\theoremstyle{definition}
\newtheorem{definition}[theorem]{Definition}
\newtheorem{remark}[theorem]{Remark}
\newcommand{\R}{\mathbb R}
\newcommand{\C}{\mathbb C}
\newcommand{\Heis}{\mathbb H}
\newcommand{\cL}{\mathcal L}
\newcommand{\cD}{{\mathcal D}}
\newcommand{\bD}{\mathbf D}
\newcommand{\bM}{\mathbf M}
\newcommand{\bI}{\mathbf I}
\newcommand{\bi}{{\mathbf i}}
\newcommand{\fheis}{{\mathfrak h}}
\newcommand{\deriv}[1]{{\frac{\partial}{\partial #1}}}
\DeclareMathOperator{\diver}{div}
 \DeclareMathOperator{\Imag}{Im}
\DeclareMathOperator{\Ker}{Ker}
\numberwithin{theorem}{section} \numberwithin{equation}{section}
\title[$C^2$ conformal maps of the Heisenberg group are $C^\infty$]{A new proof of the $C^\infty$ regularity of $C^2$ conformal mappings on the Heisenberg group}
\author{Alex D. Austin}
\address{ADA: Department of Mathematics \\ University of California, Los Angeles \\ Box 951555 \\ Los Angeles, CA 90095-1555}
\email{aaustin@math.ucla.edu}
\author{Jeremy T. Tyson}
\address{JTT: Department of Mathematics \\ University of Illinois at Urbana-Champaign \\ 1409 West Green St. \\ Urbana, IL 61801}
\email{tyson@illinois.edu}
\date{\today}
\thanks{2010 {\it Mathematics Subject Classification.} Primary 30L10; Secondary 30C65, 53C17, 35J70 \\  {\it Key words and phrases.} Heisenberg group, conformal mapping, Kor\'anyi--Reimann flow.  \\ JTT was supported by NSF Grant DMS-1600650 `Mappings and measures in sub-Riemannian and metric spaces'.}
\begin{document}

\maketitle

\begin{center}
\it Dedicated to Bogdan Bojarski
\end{center}

\begin{abstract}
We give a new proof for the $C^\infty$ regularity of $C^2$ smooth conformal mappings of the sub-Riemannian Heisenberg group. Our proof avoids any use of nonlinear potential theory and relies only on hypoellipticity of H\"ormander operators and quasiconformal flows. This approach is inspired by prior work of Sarvas and Liu.
\end{abstract}

\section{Introduction}

In this paper we give a new proof of the $C^\infty$ regularity of $C^2$ smooth conformal mappings of the Heisenberg group.

Recall that Liouville's rigidity theorem states that conformal mappings of Euclidean domains in dimension at least three are the restrictions of M\"obius transformations. In particular, they are $C^\infty$ smooth.

Liouville's theorem has a long and storied history which is closely tied to the development of geometric mapping theory and analysis in metric spaces throughout the latter half of the twentieth century. The first proof, for $C^3$ diffeomorphisms, is due to Liouville in 1850. Gehring's proof \cite{geh:rings} of the Liouville theorem for $1$-quasiconformal mappings was a major turning point and inaugurated a line of research aimed at identifying optimal Sobolev regularity criteria. An extension of the Liouville theorem to $1$-quasiregular mappings was first obtained by Reshetnyak; see for instance his books \cite{resh:bounded-distortion} and \cite{resh:stability}. Since that time the topic has been extensively investigated by many people, including Bojarski, Iwaniec, Martin and others. The book by Iwaniec and Martin \cite{im:gft} gives an excellent overview.

Our present work is motivated by a recent proof of Liouville's theorem due to Liu \cite{liu:liouville}. In contrast with previous proofs, which relied on nonlinear PDE and the regularity theory for $p$-harmonic functions, Liu's proof uses purely linear techniques, specifically, an analysis of Ahlfors' conformal strain operator
and quasiconformal flows. An earlier paper by Sarvas \cite{sar:liouville} used similar methods to derive Liouville's theorem in the $C^2$ category.

Modern developments in the theory of analysis in metric spaces motivate the study of quasiconformal and conformal mappings beyond Riemannian environments. The sub-Riemannian Heisenberg group $\Heis^n$ was historically the first such space in which quasiconformal mapping theory was considered, and remains an important testing ground for ongoing research. Mostow \cite{mos:rigidity} used quasiconformal mappings of the Heisenberg group in the proof of his eponymous rigidity theorem for rank one symmetric spaces. Kor{\'a}nyi and Reimann \cite{kr:qc}, \cite{kr:foundations} undertook a comprehensive development of Heisenberg quasiconformal mapping theory. In particular, in \cite{kr:qc} the authors prove a Liouville theorem for $C^4$ conformal mappings of the first Heisenberg group $\Heis = \Heis^1$ via the boundary behavior of biholomorphic mappings and the Cauchy--Riemann equations in $\C^2$.

The first proof of Liouville's theorem for $1$-quasiconformal maps of the Heisenberg group was by Capogna \cite{cap:regularity1}. Capogna's proof, similar to those of Gehring and others in the Euclidean setting, relied on nonlinear potential theory, specifically, regularity estimates for $Q$-harmonic functions (here $Q$ is the homogeneous dimension of the Heisenberg group). More recent developments include the work of Capogna--Cowling \cite{cc:conformality} (smoothness of $1$-quasiconformal maps in all Carnot groups), Cowling--Ottazzi \cite{co:conformal} (classification of conformal maps in all Carnot groups), and Capogna--Le Donne--Ottazzi \cite{CLDO} (smoothness of $1$-quasiconformal maps of certain sub-Riemannian manifolds).

In this paper we return to the setting of the Heisenberg group $\Heis^n$. Our aim is to give a new proof of the following theorem.

\begin{theorem}\label{th:main}
Every $C^2$ smooth conformal mapping between domains of the Heisenberg group $\Heis^n$ is $C^\infty$ smooth.
\end{theorem}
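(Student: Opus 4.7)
My plan is to adapt the strategy of Sarvas \cite{sar:liouville} and Liu \cite{liu:liouville} to the Heisenberg setting, substituting the sub-Laplacian $\Delta_{\Heis} = \sum_{i=1}^n(X_i^2 + Y_i^2)$ for the ordinary Laplacian and the Kor\'anyi--Reimann flow equation for the classical conformal Killing system. The two structural inputs to be exploited are H\"ormander's hypoellipticity theorem, which applies to $\Delta_{\Heis}$ because the commutators $[X_i, Y_i]$ generate the vertical direction $T$, together with the finite-dimensional Lie algebra $\mathfrak{g}$ of contact conformal vector fields on $\Heis^n$ (classified in \cite{kr:qc} and \cite{co:conformal}).

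The first step will be to record the $C^2$ conformality of $f : \Omega \to \Heis^n$ as a pointwise algebraic system in the horizontal Jacobian. Writing $f = (f_1,\ldots,f_{2n+1})$, conformality requires that the $2n \times 2n$ matrix of horizontal derivatives $X_j f_k, Y_j f_k$ (restricted to horizontal target components $k \leq 2n$) equal $\lambda$ times an element of the similitude group preserving the standard complex structure on the horizontal distribution, where $\lambda$ is the conformal factor. The vertical component $f_{2n+1}$ is determined by the horizontal ones via the contact condition, so all of the analysis can be carried out on the horizontal components together with $\lambda$.

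The main step is to transport vector fields from the target. For each $V \in \mathfrak{g}$ I would form the pull-back $W_V := f^*V$; because $f$ is conformal and $V$ is contact conformal, $W_V$ is itself a contact conformal vector field on $\Omega$, a priori only of class $C^1$. By the Kor\'anyi--Reimann representation, every such field arises as a Hamiltonian-type lift of a scalar potential $p_V$, and the contact conformal condition forces $p_V$ to satisfy an overdetermined system whose integrability condition reduces, after manipulations using $[X_j,Y_j] = -4T$, to a scalar H\"ormander equation of the form $\Delta_{\Heis} p_V + (\text{lower order}) = 0$. Hypoellipticity then promotes each $p_V$ to $C^\infty$, and hence each $W_V$ to $C^\infty$. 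Because $\mathfrak{g}$ acts transitively on $\Heis^n$, the collection $\{W_V\}_{V \in \mathfrak{g}}$ determines $f$ up to a finite-dimensional ambiguity, so smoothness of all the $W_V$ forces the smoothness of $f$ itself.

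The principal obstacle will be the rigorous derivation of the scalar hypoelliptic equation for the Kor\'anyi--Reimann potential. The contact conformal system naturally produces a degenerate principal symbol in the non-horizontal directions, and the cancellations that collapse it to a genuine H\"ormander operator only become visible after passing to the complex frame $Z_j = \tfrac12(X_j - iY_j)$ and carefully tracking where vertical derivatives are absorbed via the bracket relation above. A secondary difficulty is the bootstrap: the coefficients of the equation for $p_V$ involve derivatives of the a priori merely $C^2$ map $f$, so smoothness must be obtained inductively, upgrading $f$ to $C^3$, $C^4$, and so on, and one must verify that this iteration terminates cleanly in $C^\infty$ rather than stalling at some finite order.
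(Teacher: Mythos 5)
Your architecture is essentially the paper's: pull back flows of conformal vector fields through $f$, represent the resulting infinitesimal generators by their Kor\'anyi--Reimann potentials, derive a hypoelliptic equation for each potential, and recover smoothness of $f$ from smoothness of the potentials. The paper, however, uses only the left translations --- the flows $s\mapsto f^{-1}(\exp(sW)*f)$ for $W$ among $X_j, Y_j, T$ --- rather than the full conformal algebra $\mathfrak{g}$, and this makes your last step unnecessary: the corresponding potentials are computed explicitly to be $-\tfrac14\lambda_F^{-1}$ and $\pm f_j\lambda_F^{-1}$, so smoothness of the potentials gives smoothness of $\lambda_F$ and of the horizontal components $f_j$ by division, with no appeal to transitivity or flow reconstruction; the vertical component is then handled by one further application of hypoellipticity to $\triangle_0 f_{2n+1}=\diver_0(\nabla_0 f_{2n+1})\in C^\infty$.

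The genuine gap is at the analytic heart of your argument. Infinitesimal conformality of the pulled-back field does not reduce to a second-order equation of the form $\Delta_{\Heis}p_V+(\text{lower order})=0$. What it yields is the overdetermined system $Z_jZ_kp_V=0$ for all $1\le j,k\le n$ (together with its conjugate, since $p_V$ is real), and from this system the sub-Laplacian cannot be extracted: tracing over $j=k$ and adding the conjugate gives only $\sum_j(X_j^2-Y_j^2)p_V=0$, an operator of indefinite signature on the horizontal distribution which is not hypoelliptic, while the products $Z_j\overline{Z_j}$ and $\overline{Z_j}Z_j$ needed to assemble $\triangle_0$ are not controlled by the system at all. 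The hypoellipticity must instead be wrung from the fourth-order factorization
$$
\tfrac12\sum_{j,k=1}^n\bigl(\overline{Z_j}\,\overline{Z_k}Z_jZ_k+Z_jZ_k\overline{Z_j}\,\overline{Z_k}\bigr)=\cL_{-\sqrt{n(n+2)}}\cL_{\sqrt{n(n+2)}},
$$
a product of two H\"ormander operators with vertical drift $\pm\sqrt{n(n+2)}\,T$; this identity (Lemma \ref{lem:ZZ} in the paper) is indispensable and is exactly the ingredient your sketch elides when it promises a ``scalar H\"ormander equation.'' Your secondary worry about a bootstrap, on the other hand, dissolves: the operators $Z_jZ_k$ are left invariant, hence constant-coefficient, and the potentials are explicit algebraic expressions in the $C^2$ data, so the equation $Z_jZ_kp_V=0$ holds distributionally in one pass and hypoellipticity delivers $C^\infty$ immediately, with no iteration through $C^3$, $C^4$, and so on.
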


Our proof differs from previous proofs in the literature by making no use of nonlinear potential theory, nonlinear PDE, or the boundary behavior of biholomorphic mappings. The only tools which we use are hypoellipticity of H\"ormander operators and quasiconformal flows. Our method is inspired by, but differs in important respects from, the work of Liu and Sarvas.

For the benefit of the reader we provide a brief sketch of the proof of Theorem \ref{th:main} in the setting of the lowest dimensional Heisenberg group $\Heis$. Let $F:U \subset \Heis \to U' \subset \Heis$ be a $C^2$ conformal mapping between domains in the Heisenberg group, write $F=(f_1,f_2,f_3)$ in coordinates, and let $X$, $Y$ and $T$ denote the canonical left invariant vector fields spanning the Lie algebra of $\Heis$. Let $Z=\tfrac12(X-\bi Y)$ and $\overline{Z}=\tfrac12(X+\bi Y)$ be the complexified left invariant horizontal vector fields derived from $X$ and $Y$. We differentiate the conformal flow
$$
s \mapsto F^{-1}(\exp(sT)*F)
$$
at $s=0$ and use hypoellipticity of the H\"ormander operators $-\tfrac14(X^2+Y^2)\pm\sqrt{3}T$ to deduce smoothness of the horizontal Jacobian $J_0F$. In fact, we show that $u=(J_0 F)^{-1}$ is a distributional solution of the equation $ZZu=0$ and appeal to the identity
$$
\tfrac12(\overline{Z}\overline{Z}ZZ+ZZ\overline{Z}\overline{Z}) = \biggl( -\tfrac14(X^2+Y^2) + \sqrt{3}T \biggr) \biggl( -\tfrac14(X^2+Y^2) - \sqrt{3}T \biggr)
$$
to conclude that $u$ is smooth. We then repeat the argument for the conformal flows
$$
s \mapsto F^{-1}(\exp(sX)*F)
$$
and
$$
s \mapsto F^{-1}(\exp(sY)*F)
$$
to deduce that $ZZ(f_j/J_0 F)=0$ for $j=1,2$. Hence $f_1/J_0 F$ and $f_2/J_0 F$ are smooth. It follows that $f_1$ and $f_2$ are smooth, after which smoothness of $f_3$ follows from the contact property of Heisenberg conformal mappings.

The proof in higher dimensional Heisenberg groups follows a similar line of reasoning but uses all possible complexified horizontal second derivatives $Z_kZ_\ell$ and the symplectic structure of the horizontal tangent spaces.

\section{Background and definitions}

\subsection{The Heisenberg group}

We model the $n$th Heisenberg group $\Heis^n$ as $\R^{2n+1}$ equipped with the nonabelian group law
$$
(x,y,t)*(x',y',t') = (x+x',y+y',t+t'-2x\cdot y'+2x'\cdot y),
$$
where $(x,y,t),(x',y',t') \in \R^n \times \R^n \times \R$. Sometimes it is convenient to introduce the complex coordinate $z=x+\bi y \in \C^n$. Then $\Heis^n$ is modeled as $\C^n\times\R$ with group law
$$
(z,t)*(z',t') = (z+z',t+t'+2\omega(z,z')),
$$
where $\omega(z,z') = \Imag(\sum_{j=1}^n z_j\overline{z_j'})$ is the standard symplectic form in $\C^n$ and $z=(z_1,\ldots,z_n)$.

Let $x=(x_1,\ldots,x_n)$ and $y=(y_1,\ldots,y_n)$. The left invariant vector fields
$$
X_j = \deriv{x_j} + 2y_j\deriv{t} \qquad \mbox{and} \qquad Y_j = \deriv{y_j} - 2x_j \deriv{t}
$$
span a $2n$-dimensional subspace $H_p\Heis^n$ in the full tangent space $T_p\Heis^n$ at a point $p=(x,y,t)$. The subbundle $H\Heis^n$ is known as the {\it horizontal bundle}; it defines the accessible directions at $p$. Since the distribution $H\Heis^n$ is nonintegrable --- note that
\begin{equation}\label{eq:commutation-relation}
[X_j,Y_j]=-4T
\end{equation}
for any $j=1,\ldots,n$ where $T=\deriv{t}$ --- the Chow--Raskevsky theorem implies that $\Heis^n$ is horizontally connected. The {\it Carnot--Carath\'eodory metric} $d_{cc}$ is defined as follows: $d_{cc}(p,q)$ is the infimum of the lengths of absolutely continuous horizontal curves $\gamma$ joining $p$ to $q$. Horizontality of $\gamma$ means that $\gamma'(s) \in H_{\gamma(s)}\Heis^n$ for a.e.\ $s$. Length of a horizontal curve is measured with respect to the smoothly varying family of inner products defined in the horizontal subbundle making $X_1,Y_1,\ldots,X_n,Y_n$ into an orthonormal frame. It is well known that $(\Heis^n,d_{cc})$ is a geodesic and proper metric space. The metric $d_{cc}$ is topologically equivalent to the underlying Euclidean metric on $\R^{2n+1}$, but $d_{cc}$ is not bi-Lipschitz equivalent to any Riemannian metric on $\R^{2n+1}$.

The Lie algebra $\fheis_n$ of $\Heis^n$ can be identified with the tangent space at the origin. Abusing notation, we write $X_j$, $Y_j=X_{n+j}$ and $T$ for the values of the corresponding vector fields at the origin, and note that these elements form a basis for $\fheis_n$. We will denote by $\exp$ the exponential mapping from $\fheis_n$ to $\Heis^n$. Since $\Heis^n$ is connected and simply connected, $\exp$ is a global diffeomorphism.

The first-order differential operators $X_j$ and $Y_j$ are self-adjoint. The {\it Laplacian} (sometimes known as the {\it Kohn Laplacian}) on $\Heis^n$ is the operator
$$
\triangle_0 = \sum_{j=1}^n X_j^2+Y_j^2.
$$
For any $c \in \R$, the operator
\begin{equation}\label{Lc}
\cL_c := -\tfrac14\triangle_0 + c T
\end{equation}
is of H\"ormander type and hence is hypoelliptic. That is, if $\cL_c u = f$ and $f \in C^\infty$, then $u \in C^\infty$. See, e.g., \cite{hor:hypoelliptic}.

The {\it horizontal gradient} of a function $u:\Heis^n \to \R$ is
$$
\nabla_0 u = (X_1u,Y_1u,\ldots,X_nu,Y_nu)
$$
and the {\it horizontal divergence} of a horizontal vector field $\vec{V} = a_1X_1+b_1Y_1+\cdots+a_nX_n+b_nY_n$ is
$$
\diver_0 \vec{V} = X_1(a_1) + Y_1(b_1) + \cdots + X_n(a_n) + Y_n(b_n).
$$
Note that $\triangle_0 u = \diver_0(\nabla_0 u)$.

We make extensive use of the complexified first-order differential operators
$$
Z_j = \frac12 \left( X_j - \bi Y_j \right) \qquad \mbox{and} \qquad \overline{Z_j} = \frac12 \left( X_j + \bi Y_j \right), \quad j=1,\ldots,n.
$$
Note that
$$
\tfrac12\triangle_0 = \sum_{j=1}^n Z_j\overline{Z_j}+\overline{Z_j}Z_j.
$$
Moreover,
\begin{equation}\label{4ZjZk}
4Z_jZ_k = (X_jX_k-Y_jY_k) + \bi(X_jY_k+Y_jX_k), \quad j,k=1,\ldots,n.
\end{equation}
For notational convenience, we sometimes write $x_{n+1}=y_1, \ldots,x_{2n} = y_n$ and similarly $X_{n+1} = Y_1,\ldots X_{2n} = Y_n$. However, we continue to denote the final coordinate by $t$ and we write $T = \partial_t$.

\

Let $U$ be a domain in $\Heis^n$. Write $\cD '(U)$ for the real valued distributions on~$U$.

\begin{lemma}\label{lem:ZZ}
Suppose that $\lambda \in \cD '(U)$, and that $Z_jZ_k\lambda = 0$ for all $j,k=1,\ldots,n$. Then $\lambda$ may be identified with a $C^{\infty}(U)$ function.
\end{lemma}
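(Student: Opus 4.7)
My plan is to bundle the hypotheses into a single fourth-order equation of the form $\cL_c\cL_{-c}\lambda=0$ for a carefully chosen constant $c$, where $\cL_c$ is the H\"ormander operator from \eqref{Lc}, and then conclude $\lambda\in C^\infty(U)$ by two successive applications of hypoellipticity. Since $\lambda$ is real and $X_j,Y_j$ have real coefficients, complex conjugation of distributions gives $\overline{Z_jZ_k\lambda}=\overline{Z_j}\,\overline{Z_k}\lambda$, so the hypothesis $Z_jZ_k\lambda=0$ automatically yields the conjugate equations $\overline{Z_j}\,\overline{Z_k}\lambda=0$ for every $j,k$ as well.

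The central step is to establish the operator identity
\begin{equation*}
\sum_{j,k=1}^n\bigl(Z_jZ_k\,\overline{Z_j}\,\overline{Z_k}+\overline{Z_j}\,\overline{Z_k}\,Z_jZ_k\bigr)\;=\;2\,\cL_c\cL_{-c},\qquad c=\sqrt{n(n+2)},
\end{equation*}
which generalises to $\Heis^n$ the $n=1$ identity displayed in the introductory sketch. Applied to $\lambda$ in $\cD'(U)$, every summand on the left is annihilated by either $Z_jZ_k$ acting first or by $\overline{Z_j}\,\overline{Z_k}$ acting first, so the identity immediately produces $\cL_c\cL_{-c}\lambda=0$.

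I would verify the identity by a direct commutator calculation. Setting $A_j:=\tfrac14(X_j^2+Y_j^2)$, the relation~\eqref{eq:commutation-relation} gives $Z_j\overline{Z_j}=A_j-\bi T$ and $\overline{Z_j}Z_j=A_j+\bi T$, while $T$ commutes with every $X_j,Y_j$. For $j\neq k$ the operators $Z_j$ and $\overline{Z_k}$ commute (their underlying real vector fields do), so the $(j,k)$-contribution collapses to $(A_j-\bi T)(A_k-\bi T)+(A_j+\bi T)(A_k+\bi T)=2A_jA_k-2T^2$; the diagonal $(j,j)$-contribution reduces, via the single non-trivial commutator $[A_j,Z_j]=2\bi Z_jT$, to $2A_j^2-6T^2$, i.e.\ the $n=1$ computation localised at index~$j$. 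Summing over all pairs and using $\sum_jA_j=\tfrac14\triangle_0$ produces $\tfrac18\triangle_0^2-2n(n+2)T^2$, which factors as $2\cL_c\cL_{-c}$ precisely when $c=\sqrt{n(n+2)}$ (recall that $T$ commutes with $\triangle_0$).

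Once the identity is in place, since $X_1,Y_1,\ldots,X_n,Y_n$ bracket-generate $\fheis_n$ by \eqref{eq:commutation-relation}, both $\cL_c$ and $\cL_{-c}$ are of H\"ormander type and hence hypoelliptic. Setting $v:=\cL_{-c}\lambda\in\cD'(U)$, the equation $\cL_c v=0$ forces $v\in C^\infty(U)$; then $\cL_{-c}\lambda=v\in C^\infty(U)$, and a second application of hypoellipticity gives $\lambda\in C^\infty(U)$. The only genuine obstacle is the bookkeeping required for the operator identity; once the value of $c$ is pinned down the rest of the argument is automatic, and no nonlinear input is needed.
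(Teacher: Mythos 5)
Your proof is correct and follows essentially the same route as the paper: the same operator identity $\tfrac12\sum_{j,k}(\overline{Z_j}\,\overline{Z_k}Z_jZ_k+Z_jZ_k\overline{Z_j}\,\overline{Z_k})=\cL_{-c}\cL_{c}$ with $c=\sqrt{n(n+2)}$, the same use of realness of $\lambda$ to get the conjugate equations, and the same conclusion via hypoellipticity of the H\"ormander operators $\cL_{\pm c}$. The only difference is cosmetic: the paper cites Kor\'anyi--Reimann for the identity, while you verify it by a (correct) commutator computation, and your two-step application of hypoellipticity is just the standard justification of the paper's remark that a product of hypoelliptic operators is hypoelliptic.
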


\begin{proof}
Let $\cL_c$ be the operator defined in \eqref{Lc}. An easy computation yields the identity
\[
	\frac{1}{2} \sum_{j,k=1}^n (\bar{Z_j}\bar{Z_k}Z_jZ_k+Z_jZ_k\bar{Z_j}\bar{Z_k}) = \cL_{-\sqrt{n(n+2)}}\cL_{\sqrt{n(n+2)}},
\]
see, e.g., \cite[p.\ 76]{kr:foundations}. Since $\lambda \in \cD '(U)$ is real valued, the hypothesis $Z_jZ_k\lambda = 0$ also implies that $\bar{Z_j}\bar{Z_k}\lambda = 0$ for all $j,k$. Hence
\[
	\cL_{-\sqrt{n(n+2)}}\cL_{\sqrt{n(n+2)}}\lambda = 0.
\]
Since $\cL_{-\sqrt{n(n+2)}}\cL_{\sqrt{n(n+2)}}$ is a product of hypoelliptic operators, it is also hypoelliptic. Thus $\lambda \in C^{\infty}(U)$ as asserted.
\end{proof}

\begin{remark}
The second order differential operators $Z_jZ_k$ also arise in Kor\'anyi and Reimann's theory of Heisenberg quasiconformal flows \cite[Section 5]{kr:foundations}. To wit, if
$$
V := \varphi T  + \tfrac14 \sum_{j=1}^n (X_j\varphi \, Y_j - Y_j\varphi \, X_j)
$$
for some compactly supported $\varphi \in C^\infty(\Heis^n)$, then the flow maps $f_s:\Heis^n \to \Heis^n$, $s\ge 0$, solving the ODE $\partial_s f_s(p) = V(f_s(p))$, $f_0(p)=p$, are quasiconformal. Specifically, $f_s$ is $K(s)$-quasiconformal, where $K(s)$ satisfies
$$
\tfrac12(K(s)+K(s)^{-1}) = 1+n(\exp(s\sqrt2||M||_{HS})-1).
$$
Here $M = (||Z_jZ_k\varphi||_\infty)_{j,k}$ is the $n\times n$ matrix of $L^\infty$ norms of the functions $Z_jZ_k\varphi$, and $||A||_{HS}$ denotes the Hilbert--Schmidt norm of a matrix $A$.
\end{remark}

\subsection{Conformal mappings of the Heisenberg group}

A reference for the material in this section is \cite[Section 2.3]{kr:foundations}. We consider mappings $F:U \to \Heis^n$ where $U$ is a domain in $\Heis^n$. All mappings will be assumed to be diffeomorphisms which are at least $C^1$ smooth. We write $F=(f_1,\ldots,f_{2n+1})$ in coordinates and denote the standard contact form in $\Heis^n$ by
$$
\alpha = dt+2\sum_{k=1}^n (x_k\,dx_{n+k}-x_{n+k}\,dx_k).
$$
A diffeomorphism $F$ is {\it contact} if it preserves the contact structure. In other words,
\begin{equation}\label{eq:contact-condition}
F^*\alpha = \lambda_F \alpha
\end{equation}
for some nonzero real-valued function $\lambda_F$. We must have either $\lambda_F>0$ everywhere in $U$ or $\lambda_F<0$ everywhere in $U$; we assume that the former condition holds.

Contact maps preserve the horizontal distribution. Denoting by $\bD F(p)$ the differential of $F$ at the point $p$, we have $\bD F(p)(H_p\Heis^n)=H_{F(p)}\Heis^n$ for all $p \in U$. Moreover, the restriction of $\bD F(p)$ to the horizontal tangent space, denoted $\bD_0F(p)$, is a multiple of a symplectic transformation. Indeed, $F^*(d\alpha)|_{H\Heis^n} = \lambda_F d\alpha|_{H\Heis^n}$ and $d\alpha$ defines the standard symplectic structure in $\C^n$. Since $\alpha \wedge (d\alpha)^n$ is a volume form, the full Jacobian $JF = \det \bD F$ satisfies
\begin{equation}\label{eq:Jf-lambda}
JF = \lambda_F^{n+1},
\end{equation}
while the horizontal Jacobian $J_0F = \det \bD_0F$ satisfies
\begin{equation}\label{eq:JHf-lambda}
J_0 F = \lambda_F^n.
\end{equation}
Since $H_p\Heis^n = \Ker \alpha(p)$, \eqref{eq:contact-condition} implies that $\alpha(F_*X_k)=0$ for $k=1,\ldots,2n$, or more explicitly,
\begin{equation}\label{eq:contact-Xk}
X_k f_{2n+1} + 2 \sum_{j=1}^n (f_j X_k f_{n+j} - f_{n+j} X_k f_j) = 0
\end{equation}
for each $k=1,\ldots,2n$. Furthermore,
\begin{equation}\label{eq:contact-T}
Tf_{2n+1} +2 \sum_{j=1}^n (f_j Tf_{n+j} - f_{n+j} Tf_j) = \lambda_F.
\end{equation}

We now come to the definition of the main objects of study in this paper.

\begin{definition}\label{def:conformal}
A $C^1$ diffeomorphism $F:U \to U'$ between domains in $\Heis^n$ is {\it conformal} if $F$ is contact, $J_0F>0$, and the equation
\begin{equation}\label{eq:conformal-PDE}
(\bD_0 F)^T(p) \bD_0 F(p) = J_0 F(p)^{1/n} \, \bI_{2n}
\end{equation}
holds for all $p \in U$.
\end{definition}

In view of \eqref{eq:JHf-lambda}, \eqref{eq:conformal-PDE} can alternatively be written in the form
\begin{equation}\label{eq:conformal-PDE-2}
(\bD_0 F)^T(p) \bD_0 F(p) = \lambda_F(p) \, \bI_{2n}.
\end{equation}
It is known that conformal maps satisfy a Cauchy--Riemann type equation. Defining $f_k^\C = f_k + \bi f_{n+k}$ we have
\begin{equation}\label{eq:conformal-PDE-3}
\overline{Z}_\ell f_k^\C = 0
\end{equation}
for all $1\le k,\ell \le n$. See \cite[Theorem C]{kr:foundations}.

The only properties of conformal mappings which we will use in the proof of Theorem \ref{th:main} are \eqref{eq:conformal-PDE-2} and \eqref{eq:conformal-PDE-3}, together with the facts that inverses and compositions of conformal mappings are conformal. The latter facts are easy to see from the above definition.

Now assume that $F$ is $C^2$. For fixed $\nu=1,\ldots,n$ consider equation \eqref{eq:contact-Xk} for the two indices $\nu$ and $n+\nu$. Differentiating the first using $X_{n+\nu}$ and the second using $X_\nu$ and subtracting yields
\begin{equation*}\begin{split}
& X_{n+\nu} \left( X_\nu f_{2n+1} + 2 \sum_{j=1}^n (f_j X_\nu f_{n+j} - f_{n+j} X_\nu f_j ) \right) \\
& \qquad - X_\nu \left( X_{n+\nu} f_{2n+1} + 2 \sum_{j=1}^n (f_j X_{n+\nu} f_{n+j} - f_{n+j} X_{n+\nu} f_j ) \right) = 0.
\end{split}\end{equation*}
Simplifying the result using \eqref{eq:contact-T} leads to the identity
\begin{equation}\label{lambda-Xjk-identity}
\lambda_F = \sum_{j=1}^n ( X_{n+\nu} f_{n+j} \, X_\nu f_j - X_{n+j} f_\nu \, X_j f_{n+\nu} ),
\end{equation}
valid for each $\nu=1,\ldots,n$. Equation \eqref{lambda-Xjk-identity} can also be derived from the fact that $\bD_0 F$ is a multiple of a symplectic matrix.

\section{Proof of Theorem \ref{th:main}}

Let $F=(f_1,\ldots,f_{2n+1}):U\to U'$ be a $C^2$ conformal mapping between domains of the Heisenberg group $\Heis^n$. Our goal is to prove that $f_j\in C^{\infty}(U)$ for each $j$. Let $G:=F^{-1}$ and write $G=(g_1,\ldots,g_{2n+1})$.

Let $W$ be a left invariant vector field. Fix a domain $\Omega \Subset U$ and choose $s_0>0$ so that the conformal flow
$$
H(p,s) = H_W(p,s) := G(\exp(sW_0)*F(p))
$$
is well defined for all $s$ with $|s|<s_0$ and $p \in \Omega$. Write $H=(h_1,\ldots,h_{2n+1})$.

Denote by $\pi:\Heis^n \to \C^n$ the projection map $\pi(z,t)=z$ and write $\tilde{F} = \pi \circ F = (f_1,\ldots,f_{2n})$. By an application of the chain rule we find that
$$
X_\ell h_k(p,s) = \nabla_0 g_k(\exp(sW_0)*F(p)) \cdot X_\ell \tilde{F}(p)
$$
for each $k,\ell=1,\ldots,2n$ and all $p \in \Omega$. Consequently,
\begin{equation}\label{X-ell-h-k}
X_\ell h_k(p,0) = \nabla_0 g_k(F(p)) \cdot X_\ell \tilde{F}(p) = X_\ell(g_k\circ F)(p) = \delta_{k\ell},
\end{equation}
where $\delta_{k\ell}$ denotes the Kronecker delta.

We now define the matrix-valued flow
$$
\bM(p,s) = \bM_W(p,s) := \lambda_H(p,s)^{-1} (D_0H)^T(p,s) (D_0H)(p,s).
$$
Since $H(\cdot,s)$ is conformal for each $s$, $\bM(p,s) = \bI_{2n}$ for all $p \in \Omega$ and $|s|<s_0$; see \eqref{eq:conformal-PDE-2}. Hence, if $m_{k,\ell}(p,s)$ denotes the $(k,\ell)$ entry of the matrix $\bM(p,s)$, we have
$$
m_{k,\ell}'(p,0) = 0.
$$
Here and henceforth we use primes to denote differentiation with respect to the time parameter $s$. On the other hand,
$$
m_{k,\ell}'(p,s) = \left. \frac{\lambda_H \sum_m \left( (X_kh_m)(X_\ell h_m)' + (X_kh_m)'(X_\ell h_m) \right) - \lambda_H' \sum_m (X_kh_m X_\ell h_m)}{\lambda_H^2} \right|_{(p,s)}.
$$
Since $\lambda_H(p,0) = 1$, we may use \eqref{X-ell-h-k} to conclude that
$$
m_{k,\ell}'(p,0) = (X_\ell h_k)'(p,0) + (X_k h_\ell)'(p,0) - \delta_{k\ell} \lambda_H'(p,0).
$$
We compute $\lambda_H'(p,0)$ using \eqref{lambda-Xjk-identity}. We only need this value in the case $1\le k=\ell \le n$. We choose $\nu = k = \ell$ in \eqref{lambda-Xjk-identity}
and obtain
$$
\lambda_H'(p,0) = (X_{n+k} h_{n+k})'(p,0) + (X_k h_k)'(p,0).
$$
Hence
$$
m_{k,\ell}'(p,0) = \begin{cases} (X_\ell h_k)'(p,0) + (X_k h_\ell)'(p,0), & k \ne \ell, \\
(X_k h_k)'(p,0) - (X_{n+k} h_{n+k})'(p,0), & 1\le k=\ell \le n.
\end{cases}
$$

In the following lemma we identify the value of $(X_k h_\ell)'(p,0)$ for each $k$ and $\ell$. For a left invariant vector field $W$, we denote by $\tilde{W}$ the {\it right invariant mirror} of $W$, i.e., the unique right invariant vector field whose value at the origin agrees with that of $W$. For instance, if $W = X_j = \partial_{x_j} + 2y_j \partial_t$ then $\tilde{W} = \partial_{x_j} - 2y_j \partial_t$. Observe that
$$
\tilde{X_j} = X_j - 4y_j T, \quad \tilde{Y_j} = Y_j + 4x_j T, \quad \mbox{and} \quad \tilde{T} = T,
$$
and that any of the vector fields $\tilde{X_j}$, $\tilde{Y_j}$ and $\tilde{T}$ commute with all of the left invariant horizontal vector fields $X_k,Y_k$. To see why the latter claim is true, it suffices to verify that $\tilde{X_j}$ commutes with $Y_j$ and that $\tilde{Y_j}$ commutes with $X_j$. In fact,
$$
[\tilde{X_j},Y_j] = X_jY_j - 4y_j TY_j - Y_jX_j + Y_j(4y_j T) = 0
$$
and a similar computation shows that $[\tilde{Y_j},X_j]=0$.

\begin{lemma}\label{lem:RIVF}
For any $k,\ell=1,\ldots,2n$ and with $H(p,s) = H_W(p,s) = (h_1,\ldots,h_{2n+1})(p,s)$, we have
$$
(X_k h_\ell)'(p,0) = X_k(\tilde{W} g_\ell \circ F)(p).
$$
\end{lemma}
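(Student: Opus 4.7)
The plan is to unfold the definition of $H_W$, note that the spatial operator $X_k$ and the time derivative commute for sufficiently smooth flows, and then identify the $s$-derivative at $s=0$ with application of the right-invariant mirror $\tilde W$ on the target side.

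First I would write $h_\ell(p,s) = g_\ell(\exp(sW_0)*F(p))$ and observe that since $F$ and $G$ are $C^2$, the function $(p,s) \mapsto h_\ell(p,s)$ is $C^2$ jointly, so $\partial_s X_k h_\ell = X_k \partial_s h_\ell$. Consequently
\[
(X_k h_\ell)'(p,0) = X_k \bigl[\, p \mapsto \tfrac{d}{ds}\big|_{s=0} g_\ell(\exp(sW_0)*F(p)) \,\bigr](p),
\]
and the task reduces to identifying the inner $s$-derivative as $(\tilde W g_\ell)\circ F$.

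Next I would fix $q \in \Heis^n$ and analyze the curve $s \mapsto \exp(sW_0)*q$. Since this is exactly right translation of $\exp(sW_0)$ by $q$, its velocity at $s=0$ is $(DR_q)_e(W_0)$. By the very definition of the right-invariant mirror, $\tilde W_q = (DR_q)_e(\tilde W_e) = (DR_q)_e(W_0)$, so the curve in question is the integral curve of $\tilde W$ through $q$. Therefore
\[
\frac{d}{ds}\bigg|_{s=0} g_\ell(\exp(sW_0)*q) = \tilde W_q g_\ell = (\tilde W g_\ell)(q).
\]
Setting $q = F(p)$ gives $\partial_s h_\ell(p,0) = (\tilde W g_\ell \circ F)(p)$, and combining with the commutation above yields the claimed identity.

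I do not expect any serious obstacle. The only step that needs mild care is the interchange of $X_k$ and $\partial_s$, which is justified by the $C^2$ hypothesis on $F$ (and hence on $G$ by the inverse function theorem) together with smoothness of the group law and of $\exp$. The identification of $\tilde W$ as the generator of left translations from the right is standard for Lie groups, and it is in fact this asymmetry --- a left-invariant flow on the target creates a right-invariant infinitesimal action --- that drives the entire argument and explains why $\tilde W$ appears in the conclusion rather than $W$ itself.
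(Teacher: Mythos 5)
Your proof is correct and follows essentially the same route as the paper's: you identify $\partial_s|_{s=0}h_\ell$ with $(\tilde{W}g_\ell)\circ F$ by recognizing $s\mapsto \exp(sW_0)*q$ as the flow of the right-invariant mirror $\tilde{W}$, and then interchange $X_k$ with $\partial_s$. The paper packages these two steps slightly differently --- it obtains the first via a direct difference quotient using the group-law identity $\exp(\delta W_0)*q = q*\exp(\delta\tilde{W}_q)$, and appeals to the commutation of $\tilde{W}$ with $X_k$ where you invoke equality of mixed partials for the jointly $C^2$ function $h_\ell$ --- but these are cosmetic differences, not a different argument.
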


\begin{proof}[Proof of Lemma \ref{lem:RIVF}]
First, we show that for a real-valued function $u$ on $U$ and a point $q \in \Heis^n$, we have
$$
\frac{d}{ds}(u(\exp(sW_0)*q)) = (\tilde{W}_q u)(\exp(sW_0)*q).
$$
We use the identity $\exp(W_0)*q = q*\exp(\tilde{W}_q)$ to compute
\begin{equation*}\begin{split}
\frac{d}{ds}(u(\exp(sW_0)*q)) &= \lim_{\delta \to 0} \frac{u(\exp(\delta W_0) * \exp(s W_0) * q) - u(\exp(s W_0) * q)}{\delta} \\
&= \lim_{\delta \to 0} \frac{u(\exp(s W_0) * q * \exp(\delta \tilde{W}_q)) - u(\exp(s W_0) * q)}{\delta} \\
&= (\tilde{W}_q u)(\exp(sW_0)*q).
\end{split}\end{equation*}
We apply the preceding identity with $u = g_\ell$ and $q = F(p)$ to conclude that
$$
h_\ell'(p,0) = (\tilde{W}g_\ell \circ F)(p).
$$
Finally, since $\tilde{W}$ commutes with $X_k$ we have
$$
(X_k h_\ell)'(p,0) = X_k(\tilde{W}g_\ell \circ F)(p).
$$
The proof is complete.
\end{proof}

We now return to the proof of the theorem. The previous discussion has implied that
$$
m_{k,\ell}'(p,0) = \begin{cases}
X_\ell(\tilde{W} g_k \circ F)(p) + X_k(\tilde{W} g_\ell \circ F)(p), & 1\le k \ne \ell \le 2n, \\
X_k(\tilde{W} g_k \circ F)(p) - X_{n+k}(\tilde{W} g_{n+k} \circ F)(p), & 1\le k=\ell \le n.
\end{cases}
$$
We now suppose that there exists a real-valued function $\psi$ on $U$ such that
\begin{equation}\label{gradient-of-psi}
\tilde{W}g_\ell \circ F = X_{n+\ell}\psi \quad \mbox{and} \quad \tilde{W}g_{n+\ell} \circ F = -X_\ell\psi
\end{equation}
for all $\ell=1,\ldots,n$. Then for $k,\ell=1,\ldots,n$ we have
$$
m_{k,\ell}'(p,0) = (X_\ell X_{n+k} + X_{n+\ell} X_k) \psi(p)
$$
and
$$
m_{k,n+\ell}'(p,0) = (X_{n+k} X_{n+\ell} - X_k X_\ell) \psi(p).
$$
Recalling \eqref{4ZjZk} we conclude that
$$
Z_k Z_\ell \psi(p) = 0 \qquad \mbox{for all $p \in \Omega$ and all $1\le k,\ell \le n$.}
$$
Exhausting $U$ with a sequence of compactly contained subdomains $\Omega_\nu$, we conclude that
$$
Z_k Z_\ell \psi = 0 \qquad \mbox{in $U$ for all $1 \le k,\ell \le n$.}
$$
By Lemma \ref{lem:ZZ}, $\psi$ is a $C^\infty$ function.

In order to take advantage of the preceding discussion, we must find an appropriate potential function $\psi$ corresponding to each of the right invariant vector fields $\tilde{X_j}$, $\tilde{Y_j}$ and $\tilde{T} = T$.

First, we consider $\tilde{W} = \tilde{T} = T$. We claim that
$$
\psi = -\frac14 \lambda_F^{-1}
$$
verifies \eqref{gradient-of-psi} for this choice of $\tilde{W}$. Since $\lambda_F^{-1} = \lambda_G \circ F$ it suffices to prove that
\begin{equation}\label{to-prove-1}
Tg_{n+\ell} \circ F = \frac14 X_\ell(\lambda_G \circ F)
\end{equation}
and
\begin{equation}\label{to-prove-2}
Tg_{\ell} \circ F = - \frac14 X_{n+\ell}(\lambda_G \circ F).
\end{equation}
We verify \eqref{to-prove-1}. First
\begin{equation*}\begin{split}
X_\ell(\lambda_G \circ F) &= X_\ell \left( \biggl( Tg_{2n+1} + 2 \sum_{j=1}^n (g_j \, Tg_{n+j} - g_{n+j} \, Tg_j) \biggr)\circ F \right) \\
&= X_\ell(Tg_{2n+1}\circ F) + 2 \sum_{j=1}^n \biggl( X_\ell(g_j \circ F)(Tg_{n+j}\circ F) - X_\ell(g_{n+j} \circ F)(Tg_j \circ F) \biggr. \\
& \qquad \biggl. + (g_j \circ F)X_\ell(Tg_{n+j}\circ F) - (g_{n+j} \circ F) X_\ell (Tg_j \circ F) \biggr) \\
&= (\nabla_0 Tg_{2n+1}\circ F) \cdot X_\ell\tilde{F} \\
& \qquad + 2 \sum_{j=1}^n \biggl( Tg_{n+j} \, \nabla_0 g_j - Tg_j \, \nabla_0 g_{n+j} + g_j \, \nabla_0 Tg_{n+j} - g_{n+j} \, \nabla_0 Tg_j \biggr) \circ F \cdot X_\ell \tilde{F}.
\end{split}\end{equation*}
Since $T$ commutes with $\nabla_0$ we may rewrite this in the form
\begin{equation*}\begin{split}
X_\ell(\lambda_G \circ F) &= (T \nabla_0 g_{2n+1}\circ F) \cdot X_\ell\tilde{F} \\
& \qquad + 2 \sum_{j=1}^n \biggl( Tg_{n+j} \, \nabla_0 g_j - Tg_j \, \nabla_0 g_{n+j} + g_j \, \nabla_0 Tg_{n+j} - g_{n+j} \, \nabla_0 Tg_j \biggr) \circ F \cdot X_\ell \tilde{F}.
\end{split}\end{equation*}
Since $G$ is a contact map,
$$
\nabla_0 g_{2n+1} + 2\sum_{j=1}^n (g_j \nabla_0 g_{n+j} - g_{n+j} \nabla_0 g_j) = 0
$$
and so
\begin{equation*}\begin{split}
X_\ell(\lambda_G \circ F) &= - 2 \sum_{j=1}^n T\biggl( (g_j \nabla_0 g_{n+j} - g_{n+j} \nabla_0 g_j) \biggr) \circ F \cdot X_\ell\tilde{F} \\
& \qquad + 2 \sum_{j=1}^n \biggl( Tg_{n+j} \, \nabla_0 g_j - Tg_j \, \nabla_0 g_{n+j} + g_j \, \nabla_0 Tg_{n+j} - g_{n+j} \, \nabla_0 Tg_j \biggr) \circ F \cdot X_\ell \tilde{F}.
\end{split}\end{equation*}
Using again the fact that $T$ commutes with $\nabla_0$ we conclude that
\begin{equation*}\begin{split}
X_\ell(\lambda_G \circ F) &= 4 \sum_{j=1}^n \biggl( Tg_{n+j} \, \nabla_0 g_j - Tg_j \, \nabla_0 g_{n+j} \biggr) \circ F \cdot X_\ell \tilde{F} \\
&= 4 \sum_{j=1}^n \biggl( (Tg_{n+j}) \circ F \, X_\ell (g_j \circ F) - (Tg_j \circ F) \, X_\ell (g_{n+j} \circ F) \biggr) \\
&= 4 Tg_{n+\ell} \circ F.
\end{split}\end{equation*}
by \eqref{X-ell-h-k}. The proof of \eqref{to-prove-2} is similar. As previously discussed, this shows that the function
$\psi = -\frac14 \lambda_F^{-1}$, and hence $\lambda_F$ itself, is a $C^\infty$ function.

We now consider the right invariant vector field $\tilde{X}_\ell$, for which we claim that the potential function $\psi = f_{n+\ell}\,\lambda_F^{-1}$ verifies \eqref{gradient-of-psi}. We use \eqref{eq:conformal-PDE-2} to deduce that
\begin{equation}\label{X-k-g-ell}
X_\ell g_k \circ F = \lambda_F^{-1} \, X_k f_\ell
\end{equation}
and we use \eqref{eq:conformal-PDE-3} to deduce that
\begin{equation}\label{X-k-f-ell}
X_k f_\ell = X_{n+k} f_{n+\ell}
\end{equation}
for all $1\le k,\ell \le n$. Thus
\begin{equation*}\begin{split}
\tilde{X}_\ell g_k \circ F &= X_\ell g_k \circ F - 4 f_{n+\ell} Tg_k \circ F = \lambda_F^{-1} X_k f_\ell + f_{n+\ell} X_{n+k}(\lambda_F^{-1}),
\end{split}\end{equation*}
where the first line follows from the definition of $\tilde{X}_\ell$ and the second line uses \eqref{X-k-g-ell} and the previous formula for $Tg_k \circ F$. Using \eqref{X-k-f-ell} we conclude that
\begin{equation*}\begin{split}
\tilde{X}_\ell g_k \circ F &= \lambda_F^{-1} X_{n+k} f_{n+\ell} + f_{n+\ell} X_{n+k}(\lambda_F^{-1}) = X_{n+k} (f_{n+\ell} \lambda_F^{-1}).
\end{split}\end{equation*}
A similar computation shows that
$$
\tilde{X}_\ell g_{n+k} \circ F = - X_k (f_{n+\ell} \lambda_F^{-1})
$$
as claimed. As in the previous argument we conclude that $f_{n+\ell} \lambda_F^{-1}$, and hence also $f_{n+\ell}$, are $C^\infty$ for each $1\le \ell \le n$.
Repeating the argument for the right invariant vector fields $\tilde{Y}_\ell = \tilde{X}_{n+\ell}$ shows that the components $f_\ell$ are $C^\infty$ for each $1\le \ell \le n$.

Finally, the contact equation
$$
\nabla_0 f_{2n+1} + 2\sum_{j=1}^n (f_j \nabla_0 f_{n+j} - f_{n+j} \nabla_0 f_j) = 0
$$
implies that $\nabla_0 f_{2n+1}$ is a $C^\infty$ vector field, and hence
$\triangle_0 f_{2n+1} = \diver_0 ( \nabla_0 f_{2n+1} ) \in C^\infty$.
Hypoellipticity of the Kohn Laplacian now implies that $f_{2n+1}$ is $C^\infty$. We have shown that all of the components of $F$ are $C^\infty$ smooth. This completes the proof of Theorem \ref{th:main}.

\begin{remark}
It would be interesting to know if the methods introduced here could be extended to relax the $C^2$ regularity assumption to $C^1$ regularity or even to the Sobolev regularity natural for quasiconformal mappings. Such extension is not without its challenges: for one thing, we differentiate in the vertical and right-invariant directions, and a horizontal Sobolev assumption gives no a priori regularity along these paths. The matter is somewhat subtle, in that one should not be tempted to use the nonlinear theory it was our purpose to avoid. It may be possible to recast the argument, first smoothing some or all of the objects, then justifying the correct limits. Mollification in the sub-Riemannian context has the difficulty that a smoothed contact mapping is likely no longer contact. Depite this, such arguments have been made to work before, and the interested reader might like to consult \cite{dair:bounded-distortion-arb} as a useful starting point.
\end{remark}


\end{document}